\newtheorem{theorem}{Theorem}[section]
\newtheorem{proposition}{Proposition}[section]
\newtheorem{cor}{Corollary}[section]
\newtheorem{lemma}{Lemma}[section]
\newtheorem{remark}{Remark}[section]
\newtheorem{definition}{Definition}[section]
\renewcommand {\epsilon}{\varepsilon}
\newcommand{\N}{\mathbb{N}}
\newcommand{\R}{\mathbb{R}}
\newcommand{\ioe}{\leq}
\begin{document}
\title{On the class of order L-weakly	and order M-weakly
compact operators}
\author[D. Lhaimer]{Driss Lhaimer}
\address{Department
of Mathematics, Faculty of Sciences, Ibn Tofail University, P.O. Box 133, Kenitra 14000, Morocco}
\email{driss.lhaimer@uit.ac.ma}
\email{mohammed.moussa@uit.ac.ma}
\author[K. Bouras]{khalid Bouras}
\address{Department
of Mathematics, Faculty Polydisciplinary, Abdelmalek Essaadi University, P.O. Box 745, Larache 92004, Morocco}
\email{bouraskhalid@hotmail.com}
\author[M.Moussa]{Mohammed Moussa}
\address{Department
of Mathematics, Faculty of Sciences, Ibn Tofail University, P.O. Box 133, Kenitra 14000, Morocco}
\begin{abstract}
In this paper, we introduce and study new concepts of order L-weakly and order M-weakly compact operators. As consequences, we obtain some characterizations of Banach lattices
with order continuous norms or whose topological duals have order continuous norms.\\
It is proved that if $T : E \longrightarrow F$ is an operator between two Banach lattices, then $T$ is order M-weakly compact if and only if its adjoint $T'$ is order L-weakly compact. Also, we show that if its adjoint $T'$ is order M-weakly compact, then $T$ is order L-weakly compact. Some related results are also obtained.
\end{abstract}
\subjclass[2010]{46B25, 46B42, 47B60, 47B65.}
\keywords{L-weakly compact operator, M-weakly compact operator, order weakly compact operator, 
order L-weakly compact operator, order M-weakly compact operator, order continuous norm, Banach
lattice.}

\maketitle
\section{Introduction and notation}
Throughout this paper $X$ and $Y$ will denote real Banach spaces, $E$ and $F$ will denote real Banach lattices and $B_ X$ will denote the closed unit ball of $X$. 
We will use the term operator, between two Banach spaces, to mean a bounded linear mapping. The space of all operators from $X$ into $Y$ will be denoted by $L(X, Y )$.
For the convenience of the reader, let us recall some notions and results that this work involves.\\
$E^a$ denotes the maximal ideal in $E$ on which the induced norm is order continuous. Note that $E^a$ is closed and
$$E^ a = \{x \in E ~:~ each~ monotone ~sequence ~in~ [0, |x|]~ is~ convergent\}.$$ 
The difficulty of studying weakly compact operators in Banach lattices resulted in introduction of related notions of L-weak compactness and M-weak compactness.
A nonempty bounded subset $A$ of $E$ is called L-weakly compact if for every disjoint sequence $(x_n)$ in the
solid hull of $A$ we have $\lim \Vert x_n \Vert=0$. Note that every L-weakly compact subset $A \subset E$ is relatively weakly compact \cite[$Proposition~3.6.5$]{MN1}.
The classes of L-weakly and M-weakly compact operators were introduced by Meyer-Nieberg in \cite{MN2}. An operator $T$ from $X$ into $F$ is called L-weakly compact if $T (B_ X )$ is an L-weakly compact subset of $F$. An operator $T$ from $E$ into $Y$ is called M-weakly compact if $\lim  T (x_ n ) = 0$ holds for every norm bounded disjoint sequence $(x_ n )$ in $E$.\\
Following P. G. Dodds \cite{DF}, we shall say that an operator $T : E \longrightarrow Y$ is order weakly compact whenever $T [0, x]$ is a relatively weakly compact subset of $Y$ for each $x \in E^+$.\\
We introduce a new class of order L-weakly (resp. M-weakly) compact operators. An operator $T$ from a Banach lattice $E$ into a Banach lattice $F$ is called order L-weakly compact whenever $T[0,x]$ is an L-weakly compact subset of $F$ for each $x\in E^+$, and an operator $T$ from
a Banach lattice $E$ into a Banach lattice $F$ is called order M-weakly compact if for every disjoint sequence $(x_n)$ in $B_E$ and every  order bounded  sequence $(f_ n ) $ of $F'$ we have $f_n(T(x_n))\rightarrow 0$. The space of order L-weakly (resp. M-weakly) compact operators between $E$ and $F$ will be denoted by $oLW(E,F)$ (resp. $oMW(E,F)$).\\
Note that the class of order L-weakly (resp. M-weakly) compact operators contains strictly that of L-weakly (resp. M-weakly) compact operators.
On the other hand, it is easy to see that each order L-weakly compact operator is order weakly compact, but the converse is false in general. 
We begin by establishing sequential characterizations of order
L-weakly compact operators. As consequences, we will give some interesting results.\\
We know that the classes of L-weakly and M-weakly compact operators are in duality with each other (an operator $T$ between two Banach
lattices is L-weakly compact (resp. M-weakly compact) if and only if its adjoint $T'$ is M-weakly compact (resp. L-weakly compact)) \cite[$Proposition~3.6.11$]{MN1}. As we shall see, a similar result for the classes of order L-weakly and order M-weakly compact operators will be proved.\\
An operator $T~:E\longrightarrow F$ is called order bounded if it maps order bounded subsets of $E$ to order bounded subsets of $F$. Note that every positive operator is order bounded. We introduce a new class of strong order bounded operators. An operator $T$ from $X$ into $F$ is called strong order bounded if it maps the unit ball of $X$ into an order bounded subset of $F$. Note that every strong order bounded operator between two Banach lattices is order bounded. We conclude this paper by giving necessary and sufficient condition under which each order (resp. strong order) bounded operator is order L-weakly compact, order M-weakly compact (resp. L-weakly compact, M-weakly compact).\\
For the theory of Banach lattices and operators, we refer the reader to the monographs \cite{AB1,MN1,Z}.
\section{MAIN RESULTS}
We start by the following definitions.
\begin{definition}
An operator $T$ from $E$ into $F$ is called order
L-weakly compact whenever $T[0,x]$ is an L-weakly compact subset of $F$ for each $x\in E^+$.
\end{definition}
\begin{remark} ${}$
\begin{itemize}
\item Note that 
$$ \begin{array}{l l} 
Id_E $ is order L-weakly compact$ &  \Longleftrightarrow \forall x\in E^+,~[0,x] $ is L-weakly compact$.  \\
& \Longleftrightarrow \forall x\in E^+, x\in E^a.\\ &\Longleftrightarrow E^a=E.\\ & \Longleftrightarrow$ The norm of $ E $ is order
continuous$.
\end{array}$$
\item Clearly, every L-weakly compact operator is order
L-weakly compact (it suffices to note that every order interval of $E$ is norm bounded), but the converse is not true in general. For instance, consider the operator $Id_{\mathit{c}_0}: \mathit{c}_0\longrightarrow \mathit{c}_0$. Since the norm of $\mathit{c}_0$ is order
continuous, $Id_{\mathit{c}_0}$ is order L-weakly compact. On the other hand $B_{\mathit{c}_0}$ is not relatively weakly compact, and therefore  $Id_{\mathit{c}_0}$ is not L-weakly compact.
\item It is easy to see that each order L-weakly compact operator is 
order weakly compact (it suffices to note that every L-weakly compact subset of a Banach lattice is relatively weakly compact). The converse is not true in general, for instance, 
consider the operator $T\in L( \ell_1 , \ell_{\infty})$ defined by :
$$\forall (\alpha_n)\in\ell_1,~T((\alpha_n))=(\sum_{n=1}^\infty \alpha_n)(1,1,1,\ldots)$$
Clearly, $T$ is a compact operator (it has rank one), and hence $T$ is order weakly compact.\\
Let $e=(\dfrac{1}{n^2})_{n\in\N^*}$. The sequence $(e_n)$ of the standard unit vectors is a disjoint sequence in the solid hull of $T[0,e]$ $(\vert e_n\vert \ioe T(e) )$. From $\Vert e_n\Vert=1\nrightarrow 0$, we see that $T$ fails to be order L-weakly compact.
\end{itemize}
\end{remark}
\begin{definition}
An operator $T$ from $E$ into $F$ is called order
M-weakly compact if for
every disjoint sequence $(x_n)$ in $B_E$ and every  order bounded  sequence $(f_ n ) $ of $F'$ we have $f_n(T(x_n))\rightarrow 0$
\end{definition}
\begin{remark}
Clearly, every M-weakly compact operator is order M-weakly compact 
(for every sequence $(y_ n ) $ of $F$, if $\|y_n\|\rightarrow 0 $, then for every  order bounded  sequence $(f_ n ) $ of $F'$, $f_n(y_n)\rightarrow 0$), but the converse is not true in general. For instance, consider the operator $Id_{\ell_\infty}$. Since the norm of $\ell_\infty'$ is order
continuous, $Id_{\ell_\infty}$ is order M-weakly compact (see Corollary $\ref{5}$). And since $Id_{\ell_1}$ is not L-weakly compact, $Id_{\ell_\infty}=Id'_{\ell_1}$ is not M-weakly compact.\end{remark}
For our first result, we will need the following lemma.
\begin{lemma}\label{1} \cite[Theorem 5.63]{AB1}\\
For any two nonempty bounded sets $A \subset E$ and
$B \subset E'$ , the following statements are equivalent:
\begin{enumerate}
\item Each disjoint sequence in the solid hull of $A$ converges uniformly
to zero on $B$.
\item Each disjoint sequence in the solid hull of $B$ converges uniformly
to zero on $A$.
\end{enumerate}
\end{lemma}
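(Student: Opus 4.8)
The plan is to argue by contraposition, proving the two implications separately; the asymmetry between $E$ and $E'$ (the dual $E'$ is always Dedekind complete, whereas $E$ need not be) is exactly what makes one direction routine and the other delicate, so I do not expect a single symmetric argument to settle both. A preliminary normalization handles the signs: since solid hulls are solid and $|f(x)|\le |f|(|x|)$, replacing $x_n$ by $|x_n|$ and $f$ by $|f|$ keeps us inside the respective solid hulls, so in each direction I may assume that the disjoint sequence and the chosen functionals (resp. elements) are positive. Thus failure of (1) yields $\epsilon>0$, a disjoint sequence $(x_n)$ of positive elements of $\mathrm{sol}(A)$, and positive $f_n\in\mathrm{sol}(B)$ with $f_n(x_n)\ge\epsilon$, while failure of (2) yields the mirror data with the roles of $E$ and $E'$ interchanged.

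For the implication $\neg(1)\Rightarrow\neg(2)$ I would disjointify the functionals. Working in the Dedekind complete lattice $E'$, let $g_n$ be the component of $f_n$ carried by $x_n$, i.e. $g_n(y)=\sup_k f_n(y\wedge kx_n)$ for $y\ge 0$ (extended linearly). Then $0\le g_n\le f_n$, so $g_n\in\mathrm{sol}(B)$; the lower bound survives because $g_n(x_n)=f_n(x_n)\ge\epsilon$; and, crucially, since the $x_n$ are pairwise disjoint their carrier bands in $E'$ are pairwise disjoint, so $(g_n)$ is a disjoint sequence. As $\sup_{x\in A}|g_n(x)|\ge g_n(x_n)\ge\epsilon$, the sequence $(g_n)$ witnesses the failure of (2). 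The only point needing care here is the standard (but nontrivial) fact that components of positive functionals carried by disjoint positive elements are themselves disjoint.

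The converse $\neg(2)\Rightarrow\neg(1)$ is the main obstacle, because the same recipe would ask us to disjointify the elements $x_n\in A\subset E$ along the disjoint functionals $f_n$, and $E$ carries no band projections in general. My plan is to pass to the bidual: embed $A$ into the Dedekind complete space $E''$, form the components $\xi_n$ of $\widehat{x_n}$ carried by $f_n$ to obtain a disjoint sequence in $\mathrm{sol}(\widehat A)\subset E''$ with $\langle f_n,\xi_n\rangle\ge\epsilon$, and then descend back to $E$. This descent is the delicate step: one must either replace the $\xi_n$ by genuinely disjoint elements of $\mathrm{sol}(A)$ on which the $f_n$ remain bounded below (a gliding-hump extraction), or show that condition (1) for $A$ in $E$ is equivalent to its bidual counterpart for $\widehat A$ in $E''$. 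I expect essentially all of the real work to be concentrated in making this descent rigorous.
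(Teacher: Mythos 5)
The paper offers no proof of this lemma at all: it is quoted verbatim as Theorem 5.63 of Aliprantis--Burkinshaw \cite{AB1}, so your attempt has to be measured against the known proof in that reference rather than against anything internal to the paper. Against that standard, your proposal is incomplete in an essential way, and has one smaller slip. The implication you call routine ($\neg(1)\Rightarrow\neg(2)$) is essentially right: taking the components $g_n$ of $|f_n|$ carried by the pairwise disjoint positive elements $x_n$ in the Dedekind complete lattice $E'$ does yield a disjoint sequence in $\mathrm{sol}(B)$ with $g_n(x_n)\ge\epsilon$. But the last line does not follow as written: $x_n$ lies in $\mathrm{sol}(A)$, not in $A$, so $\sup_{x\in A}|g_n(x)|\ge g_n(x_n)$ is unjustified, and statement (2) demands uniform convergence on $A$ itself. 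Passing from ``large on $\mathrm{sol}(A)$'' to ``large on $A$'' needs one more step, e.g. choosing $a_n\in A$ with $x_n\le |a_n|$, using $g_n(x_n)\le g_n(|a_n|)=\sup\{|h(a_n)|:|h|\le g_n\}$, and replacing $g_n$ by a suitable $h_n$ with $|h_n|\le g_n$, which preserves both disjointness and membership in $\mathrm{sol}(B)$.

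The genuine gap is the converse implication, which you do not prove. You describe the bidual strategy, correctly identify that the descent from the disjoint components $\xi_n\in E''$ back to genuinely disjoint elements of $\mathrm{sol}(A)\subset E$ is the crux, and then stop, writing that you expect essentially all of the real work to be concentrated there. That expectation is accurate, which is exactly why the argument cannot be left in this state: the components $\xi_n$ carried by the $f_n$ live in $E''$ with no mechanism offered for replacing them by disjoint elements of $E$ on which the $f_n$ stay bounded below, and neither of your two suggested repairs (a gliding-hump extraction, or the equivalence of condition (1) with its bidual counterpart) is carried out; the second is itself of roughly the same depth as the lemma. The proof in \cite{AB1} avoids the bidual entirely: it rests on the explicit disjointification $u_n=\bigl(x_{n+1}-4^n\sum_{i=1}^{n}x_i-2^{-n}x\bigr)^{+}$ (Lemma 4.35 there), which manufactures pairwise disjoint elements below an order bounded sequence in any Riesz space without Dedekind completeness, combined with the ``almost order bounded'' characterization of Theorem 4.36 to cope with the fact that elements of $A$ are only norm bounded rather than order bounded. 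Without either completing your descent or importing that machinery, the direction $(2)\Rightarrow(1)$ remains unproved.
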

The following result gives sequential characterizations of order L-weakly compact operators.
\begin{theorem}\label{2}
For an operator $T : E \longrightarrow F$, the following statements are equivalent:
\begin{enumerate}
\item $T$ is order L-weakly compact.
\item For every disjoint sequence $(f_ n )$ of $B_{F'}$ we have $\vert T'(f_n)\vert \rightarrow 0$ for the topology $\sigma(E' , E)$.
\item For every order bounded sequence $(x_ n ) $ of $E$ and every disjoint sequence $(f_ n )$ of $B_{F'}$ we have $f_ n (T( x_ n) ) \rightarrow 0$.
\end{enumerate}
\end{theorem}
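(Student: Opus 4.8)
The plan is to obtain $(1)\Leftrightarrow(2)$ as a direct application of Lemma \ref{1}, and then to close the cycle of equivalences with the straightforward implication $(2)\Rightarrow(3)$ together with a supremum-attaining argument for $(3)\Rightarrow(2)$.

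For the central equivalence $(1)\Leftrightarrow(2)$, I would fix $x\in E^+$ and apply Lemma \ref{1} (with the ambient lattice taken to be $F$) to the bounded sets $A=T[0,x]\subset F$ and $B=B_{F'}\subset F'$. Since the unit ball of a Banach lattice is solid, the solid hull of $B_{F'}$ is $B_{F'}$ itself, so statement (2) of the lemma really concerns arbitrary disjoint sequences $(f_n)$ of $B_{F'}$. The key is to translate both uniform-convergence conditions: on one side, uniform convergence to zero on $B_{F'}$ coincides with norm convergence because $\sup_{f\in B_{F'}}|f(y)|=\|y\|$, so statement (1) of the lemma says exactly that $T[0,x]$ is L-weakly compact; on the other side, uniform convergence of $(f_n)$ to zero on $T[0,x]$ reads $\sup_{0\le z\le x}|(T'f_n)(z)|\to0$, which by the lattice formula $|T'f_n|(x)=\sup_{|z|\le x}|(T'f_n)(z)|$ and the sandwich $\sup_{0\le z\le x}|g(z)|\le|g|(x)\le2\sup_{0\le z\le x}|g(z)|$ (obtained by writing $z=z^+-z^-$ with $z^\pm\le x$) is equivalent to $|T'f_n|(x)\to0$. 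Hence, for each fixed $x$, Lemma \ref{1} gives that $T[0,x]$ is L-weakly compact if and only if $|T'f_n|(x)\to0$ for every disjoint $(f_n)\subset B_{F'}$; quantifying over $x\in E^+$ and recalling that each $|T'f_n|$ is a positive functional (so that $\sigma(E',E)$-convergence of $(|T'f_n|)$ to $0$ is equivalent to $|T'f_n|(x)\to0$ for all $x\in E^+$) yields $(1)\Leftrightarrow(2)$.

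For $(2)\Rightarrow(3)$, given an order bounded sequence $(x_n)$, say $|x_n|\le x$, and a disjoint sequence $(f_n)\subset B_{F'}$, I would simply estimate $|f_n(T(x_n))|=|(T'f_n)(x_n)|\le|T'f_n|(|x_n|)\le|T'f_n|(x)$, which tends to $0$ by (2). For the converse $(3)\Rightarrow(2)$, I would fix $x\in E^+$ and a disjoint $(f_n)\subset B_{F'}$ and, using $|T'f_n|(x)=\sup_{|z|\le x}|(T'f_n)(z)|$, select for each $n$ a vector $z_n$ with $|z_n|\le x$ and $|(T'f_n)(z_n)|>|T'f_n|(x)-\tfrac1n$; then $(z_n)$ is order bounded by $x$, so (3) forces $f_n(T(z_n))\to0$ and therefore $|T'f_n|(x)\to0$, giving (2).

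I expect the main obstacle to be the first equivalence, specifically setting up the correct dictionary between the two symmetric statements of Lemma \ref{1} and the present data: identifying $A$ and $B$, exploiting the solidity of $B_{F'}$, and, most delicately, converting ``uniform convergence on $T[0,x]$'' into the intrinsic condition $|T'f_n|(x)\to0$ through the modulus formula for functionals. The remaining implications are routine once this translation is secured.
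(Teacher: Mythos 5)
Your proposal is correct and follows essentially the same route as the paper: Lemma \ref{1} applied to $A=T[0,x]$ and $B=B_{F'}$ for $(1)\Leftrightarrow(2)$, the direct estimate $|f_n(Tx_n)|\le|T'f_n|(x)$ for $(2)\Rightarrow(3)$, and an approximate-supremum selection for $(3)\Rightarrow(2)$ (which the paper phrases as a proof by contradiction with a subsequence, but the content is identical). If anything you are slightly more careful than the paper, which asserts the equality $\sup_{0\le z\le x}|f_n(Tz)|=|T'f_n|(x)$ where in general only the two-sided estimate you give holds; this does not affect the equivalence.
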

\begin{proof}
$(1)\Longleftrightarrow (2)$
$T: E\longrightarrow F$ is order L-weakly compact if and
only if for each $x\in E^+$, every disjoint sequence in the solid hull of $T[0,x]$ converges uniformly to zero on $B_{F'}$. By $Lemma~ \ref{1}$, this is equivalent to
saying that for each $x\in E^+$, every disjoint sequence $(f_n)$ of $B_{F'}$ converges uniformly to zero on $T[0,x]$ (i.e. $\sup\{\vert f_n(T(z))\vert~:~z\in [0,x]\}=\vert T'(f_n)\vert(x)\rightarrow 0$). In other words, $T$ is order L-weakly compact if and only if for every disjoint sequence $(f_ n )$ of $B_{F'}$ we have $\vert T'(f_n)\vert \rightarrow 0$ for the topology $\sigma(E' , E)$.\\
$(2)\Longrightarrow (3)$
Let $(f_n)$ be a disjoint sequence of $B_{F'}$. Let $x\in E^+$, and let $(x_n)$ be a sequence of $[0,x]$. Then it follows from $\vert f_n(T(x_n))\vert\ioe \vert T'(f_n)\vert(x)\longrightarrow 0$ that $f_n(T(x_n))\rightarrow 0$.\\
$(3)\Longrightarrow (2)$ 
Let $(f_n)$ be a disjoint sequence of $B_{F'}$. Assume by way of contradiction that $\vert T'(f_n)\vert \nrightarrow 0$ for the topology $\sigma(E' , E)$.Then, there exists some $x\in E^+$ such that $\vert T'(f_n)\vert(x) \nrightarrow 0$, and consequently there exists some $\varepsilon>0$ and a subsequence $(f_{\varphi(n)} )$ of $(f_ n)$  satisfying $\vert T'(f_{\varphi(n)})\vert(x)=\sup\{\vert f_{\varphi(n)}(T(z))\vert~:~z\in [0,x]\}>\varepsilon$ for all $n\in\N$. Thus for each $n\in\N$, there exists some $x_{\varphi(n)}\in [0,x]$, with $\vert f_{\varphi(n)}(T(x_{\varphi(n)}))\vert >\varepsilon$. From our hypothesis it follows that
$f_{\varphi(n)}(T(x_{\varphi(n)}))\rightarrow 0$, which is absurd. Hence $\vert T'(f_n)\vert \rightarrow 0$ for the topology $\sigma(E' , E)$.
\end{proof}
In a similar way we may prove the following result.
\begin{theorem}\label{3}
An operator $\varphi : E' \longrightarrow F'$ is order L-weakly compact if and only if $\varphi(f_ n) ( y_ n) \rightarrow 0$ for every order bounded sequence $(f_ n ) $ of $E'$ and every disjoint sequence $(y_ n )$ of $B_{F}$.
\end{theorem}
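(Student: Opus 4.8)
The plan is to mirror the proof of Theorem \ref{2}, except that I would apply Lemma \ref{1} to the Banach lattice $F$ (whose dual is $F'$) rather than to $E$. First I would reduce the statement to order intervals: by definition $\varphi$ is order L-weakly compact if and only if, for each $f\in(E')^+$, the set $\varphi[0,f]$ is an L-weakly compact subset of $F'$. Since the norm of $F'$ is the supremum norm over $B_F$, a disjoint sequence $(g_n)$ in the solid hull of $\varphi[0,f]$ satisfies $\Vert g_n\Vert\to 0$ precisely when $(g_n)$ converges uniformly to zero on $B_F$. Hence $\varphi[0,f]$ is L-weakly compact if and only if every disjoint sequence in the solid hull of $\varphi[0,f]$ converges uniformly to zero on $B_F$.

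At this point I would invoke Lemma \ref{1} with the two bounded sets $A=B_F\subset F$ and $B=\varphi[0,f]\subset F'$. Because $B_F$ is solid, its solid hull is $B_F$ itself, so disjoint sequences in the solid hull of $B_F$ are exactly disjoint sequences in $B_F$. Lemma \ref{1} then tells us that statement $(2)$ of the lemma (the condition just obtained) is equivalent to statement $(1)$: every disjoint sequence $(y_n)$ of $B_F$ converges uniformly to zero on $\varphi[0,f]$, that is,
$$\sup\{\,\vert\varphi(h)(y_n)\vert~:~h\in[0,f]\,\}\longrightarrow 0.$$
Collecting these equivalences yields the working criterion: $\varphi$ is order L-weakly compact if and only if for every $f\in(E')^+$ and every disjoint sequence $(y_n)$ of $B_F$ one has $\sup_{h\in[0,f]}\vert\varphi(h)(y_n)\vert\to 0$.

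From this criterion the two implications follow as in Theorem \ref{2}. Assuming $\varphi$ order L-weakly compact, given an order bounded sequence $(f_n)$ with $\vert f_n\vert\ioe f$ and a disjoint sequence $(y_n)$ of $B_F$, I would decompose $f_n=f_n^+-f_n^-$ with $f_n^+,f_n^-\in[0,f]$, so that $\vert\varphi(f_n)(y_n)\vert\ioe 2\sup_{h\in[0,f]}\vert\varphi(h)(y_n)\vert\to 0$, giving $\varphi(f_n)(y_n)\to 0$. For the converse I would argue by contraposition: if $\varphi$ were not order L-weakly compact, the criterion would furnish some $f\in(E')^+$ and a disjoint sequence $(y_n)$ of $B_F$ with $\sup_{h\in[0,f]}\vert\varphi(h)(y_n)\vert\nrightarrow 0$; passing to a subsequence I would obtain $\varepsilon>0$ and then elements $h_{\psi(n)}\in[0,f]$ with $\vert\varphi(h_{\psi(n)})(y_{\psi(n)})\vert>\varepsilon$ for all $n$. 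The sequence $(h_{\psi(n)})$ is order bounded and $(y_{\psi(n)})$ is disjoint in $B_F$, so $\varphi(h_{\psi(n)})(y_{\psi(n)})\nrightarrow 0$ contradicts the hypothesis.

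The step I expect to require the most care is the application of Lemma \ref{1}: one must recognize that L-weak compactness of a subset of $F'$ is exactly uniform convergence to zero (of disjoint sequences in its solid hull) on the \emph{solid} set $B_F$, which is what keeps the argument at the level of $B_F$ rather than forcing it up to $B_{F''}$. The passage from $[-f,f]$ to $[0,f]$ through the decomposition $f_n=f_n^+-f_n^-$ is the only other point where a little extra bookkeeping, absent from Theorem \ref{2}, is needed.
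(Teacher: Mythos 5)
Your proposal is correct and is exactly the argument the paper intends: the paper omits the proof of Theorem \ref{3} with the remark that it is ``similar'' to Theorem \ref{2}, and your write-up is precisely that adaptation, applying Lemma \ref{1} to the pair $B_F\subset F$ and $\varphi[0,f]\subset F'$ so that the disjoint test sequences live in $B_F$ rather than $B_{F''}$. The two points you flag (solidity of $B_F$ and the decomposition $f_n=f_n^+-f_n^-$) are handled correctly and are, if anything, spelled out more carefully than in the paper's proof of Theorem \ref{2}.
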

As a consequence, we obtain the following characterizations  of Banach lattices with order continuous norms or whose topological duals have order continuous norms.
\begin{cor}\label{4}
For a Banach lattice $E$ the following statements are equivalent :
\begin{enumerate}
\item $Id_E\in oLW(E)$.
\item The norm of $E$ is order continuous.
\item For every disjoint sequence $(f_ n )$ of $B_{E'}$ we have $\vert f_n \vert \rightarrow 0$ for the topology $\sigma(E' , E)$.
\item $f_ n ( x_ n ) \rightarrow 0$ for every order bounded sequence $(x_ n ) $ of $E$ and every disjoint sequence $(f_ n )$ of $B_{E'}$.
\end{enumerate}
\end{cor}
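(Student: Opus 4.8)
The statement is a direct specialization of Theorem \ref{2} to the identity operator, combined with the chain of equivalences recorded in the Remark above. The plan is to obtain $(1)\Leftrightarrow(3)\Leftrightarrow(4)$ from the former and $(1)\Leftrightarrow(2)$ from the latter.

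First I would apply Theorem \ref{2} with $F=E$ and $T=Id_E$. Here the adjoint is $T'=Id_{E'}$, so that $\vert T'(f_n)\vert=\vert f_n\vert$ and $T(x_n)=x_n$ for every $n$. Under these substitutions condition $(2)$ of Theorem \ref{2} becomes exactly statement $(3)$ of the corollary (a disjoint sequence $(f_n)$ of $B_{E'}$ satisfies $\vert f_n\vert\to 0$ for $\sigma(E',E)$), and condition $(3)$ of Theorem \ref{2} becomes exactly statement $(4)$ (the convergence $f_n(x_n)\to 0$ for every order bounded $(x_n)$ of $E$ and every disjoint $(f_n)$ of $B_{E'}$). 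Since condition $(1)$ of Theorem \ref{2} is statement $(1)$, this yields $(1)\Leftrightarrow(3)\Leftrightarrow(4)$ at once.

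To attach $(2)$ to this circle, I would use the equivalence $(1)\Leftrightarrow(2)$ already noted in the Remark above: $Id_E$ is order L-weakly compact if and only if $[0,x]$ is L-weakly compact for every $x\in E^+$; by the description of $E^a$ recalled in the introduction this is in turn equivalent to $x\in E^a$ for every $x\in E^+$, that is to $E^a=E$, which is precisely the order continuity of the norm of $E$. Combining this with the previous step closes the loop through all four statements.

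The argument is essentially mechanical once Theorem \ref{2} is available, so there is no real obstacle. The only point requiring a little care is the bookkeeping of the substitution $T=Id_E$, $T'=Id_{E'}$: one must check that the disjoint sequences in $B_{E'}$, the order bounded sequences in $E$, and the weak-star convergence $\vert f_n\vert\to 0$ in $\sigma(E',E)$ transcribe correctly from the general statement of Theorem \ref{2} to the present special case.
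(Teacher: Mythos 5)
Your proposal is correct and matches the paper's intent exactly: the corollary is stated there as an immediate consequence of Theorem \ref{2} specialized to $T=Id_E$ (giving $(1)\Leftrightarrow(3)\Leftrightarrow(4)$), with $(1)\Leftrightarrow(2)$ supplied by the chain of equivalences in the first Remark. Nothing further is needed.
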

\begin{cor}\label{5}
For a Banach lattice $E$ the following statements are equivalent :
\begin{enumerate}
\item $Id_{E'}\in oLW(E')$.
\item The norm of $E'$ is order continuous.
\item $f_ n ( x_ n) \rightarrow 0$ for every order bounded sequence $(f_ n ) $ of $E'$ and every disjoint sequence $(x_ n )$ of $B_{E}$.
\item $Id_{E}\in oMW(E)$
\end{enumerate}
\end{cor}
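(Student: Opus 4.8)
The plan is to deduce all four equivalences from results already established in the excerpt, so that essentially no new estimate is required: the statement is a repackaging of Theorem~\ref{3} and Corollary~\ref{4}, together with the bare definition of order M-weak compactness. I would organize the argument around three links, namely $(1)\Leftrightarrow(2)$, $(1)\Leftrightarrow(3)$ and $(3)\Leftrightarrow(4)$, which together close the loop.

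First I would observe that $(3)$ and $(4)$ are literally the same assertion. By definition, $Id_E\in oMW(E)$ means that $f_n(Id_E(x_n))\rightarrow 0$ for every disjoint sequence $(x_n)$ in $B_E$ and every order bounded sequence $(f_n)$ of $E'$; since $f_n(Id_E(x_n))=f_n(x_n)$, this is exactly statement $(3)$. Hence $(3)\Leftrightarrow(4)$ requires nothing beyond unwinding the definition.

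Next, for $(1)\Leftrightarrow(3)$ I would apply Theorem~\ref{3} to the operator $\varphi=Id_{E'}$. The essential point is the bookkeeping: writing the codomain $E'$ as $F'$ with $F=E$, Theorem~\ref{3} asserts that $Id_{E'}$ is order L-weakly compact if and only if $Id_{E'}(f_n)(y_n)=f_n(y_n)\rightarrow 0$ for every order bounded sequence $(f_n)$ of $E'$ and every disjoint sequence $(y_n)$ of $B_E$, which is precisely $(3)$. This identification of $Id_{E'}$ with the operator $\varphi$ of Theorem~\ref{3} is the one step where care is needed, since one must verify that the hypotheses of that theorem are satisfied with this choice of domain and codomain and that the dual pairing collapses to $f_n(y_n)$.

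Finally, for $(1)\Leftrightarrow(2)$ I would invoke Corollary~\ref{4}, applied not to $E$ but to the Banach lattice $E'$ (the dual of a Banach lattice is again a Banach lattice, so the corollary applies verbatim). Under this substitution, condition $(1)$ of Corollary~\ref{4} reads $Id_{E'}\in oLW(E')$ and condition $(2)$ reads that the norm of $E'$ is order continuous, which is exactly $(1)\Leftrightarrow(2)$ in the present statement. Combining the three links yields the full equivalence. I do not anticipate a genuine obstacle here, as every step is an application of a prior result; the only delicate moment is ensuring that Theorem~\ref{3} is invoked with the correct identification $F=E$.
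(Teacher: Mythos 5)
Your proposal is correct and follows exactly the route the paper intends: the corollary is stated as an immediate consequence of Theorem~\ref{3} (applied to $\varphi=Id_{E'}$ with $F=E$), of Corollary~\ref{4} (or the opening remark) applied to the Banach lattice $E'$, and of the definition of order M-weak compactness for $(3)\Leftrightarrow(4)$. Nothing further is needed.
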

Contrary to weakly compact operators \cite{AB2}, we also deduce that the class of order L-weakly (resp. M-weakly) compact operators satisfies the domination problem.
\begin{cor}\label{6}
Let $S, T : E \longrightarrow F$ be two positive operators such that $0 \ioe S \ioe T$.
Then $S$ is order L-weakly (resp. M-weakly) compact whenever $T$ is one.
\end{cor}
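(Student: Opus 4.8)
The plan is to treat the two assertions separately, working directly from the definitions together with the elementary lattice estimates available for positive operators.

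For the order L-weakly compact case, I would fix $x \in E^+$ and show that the solid hull of $S[0,x]$ is contained in the solid hull of $T[0,x]$. Indeed, for any $z \in [0,x]$ positivity of $S$ gives $0 \leq Sz \leq Sx$, and the domination $S \leq T$ gives $Sx \leq Tx$, so that $Sz \in [0,Tx]$. Since $Tx = T(x) \in T[0,x]$, the order interval $[0,Tx]$ lies in the solid hull of $T[0,x]$; consequently $\mathrm{sol}(S[0,x]) \subseteq \mathrm{sol}(T[0,x])$. Now any disjoint sequence in $\mathrm{sol}(S[0,x])$ is a disjoint sequence in $\mathrm{sol}(T[0,x])$, hence its norm tends to zero because $T[0,x]$ is L-weakly compact. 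Thus $S[0,x]$ is L-weakly compact for every $x \in E^+$, that is, $S$ is order L-weakly compact. The only point requiring care here is that $S[0,x]$ need not be contained in $T[0,x]$ itself, which is why the argument must be run at the level of solid hulls; once that is noticed, the inheritance of L-weak compactness by subsets is immediate.

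For the order M-weakly compact case, I would use the defining property directly. Let $(x_n)$ be a disjoint sequence in $B_E$ and $(f_n)$ an order bounded sequence of $F'$, say $|f_n| \leq f$ with $f \in (F')^+$. Then the standard estimate $|f_n(Sx_n)| \leq |f_n|(|Sx_n|)$, combined with $|f_n| \leq f$ and the inequality $|Sx_n| \leq S|x_n|$ (valid since $S$ is positive), yields $|f_n(Sx_n)| \leq f(S|x_n|)$. Using $S \leq T$ and $|x_n| \geq 0$ we obtain $S|x_n| \leq T|x_n|$, and since $f \geq 0$ this gives $|f_n(Sx_n)| \leq f(T|x_n|)$. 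The sequence $(|x_n|)$ is again disjoint and lies in $B_E$, and the constant sequence $(f)$ is order bounded in $F'$, so the order M-weak compactness of $T$ forces $f(T|x_n|) \to 0$. Hence $f_n(S(x_n)) \to 0$, and $S$ is order M-weakly compact.

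The proof is therefore essentially a matter of chaining the lattice inequalities $|g(y)| \leq |g|(|y|)$ and $|Sy| \leq S|y|$ with the domination $S \leq T$; no deep machinery is needed beyond the definitions. The step most likely to be overlooked is the passage to solid hulls in the first part, since a naive attempt to compare $S[0,x]$ with $T[0,x]$ directly fails.
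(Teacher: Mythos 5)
Your proof is correct. The paper itself gives no written proof of this corollary: it presents it as an immediate deduction from the sequential characterizations, namely Theorem \ref{2}(3) for the L-case (for an order bounded $(x_n)$ and a disjoint $(f_n)$ in $B_{F'}$ one writes $|f_n(Sx_n)|\leq |f_n|(S|x_n|)\leq |f_n|(T|x_n|)\rightarrow 0$, since $(|f_n|)$ is again disjoint in $B_{F'}$ and $(|x_n|)$ is again order bounded) and the definition for the M-case. Your second half is essentially that intended argument. Your first half takes a genuinely more elementary route, bypassing Theorem \ref{2} entirely: you observe $S[0,x]\subseteq [0,Tx]\subseteq \mathrm{sol}(T[0,x])$ and use monotonicity and idempotence of the solid hull, so that L-weak compactness of $S[0,x]$ is inherited directly from that of $T[0,x]$. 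This buys independence from the duality lemma underlying Theorem \ref{2} (Lemma \ref{1}), at the cost of the small subtlety you correctly flag, that $S[0,x]$ need not sit inside $T[0,x]$ itself. Both arguments are sound; all the lattice inequalities you invoke ($|g(y)|\leq |g|(|y|)$, $|Sy|\leq S|y|$ for $S$ positive, and preservation of disjointness under $x\mapsto |x|$) are standard and used correctly.
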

\begin{proposition}\label{7}
Let $E$ and $F$ be two Banach lattices. Then :
\begin{enumerate}
\item The set of all order L-weakly compact operators from $E$ to $F$ is a closed
vector subspace of $L(E, F)$.
\item The set of all order M-weakly compact operators from $E$ to $F$ is a closed
vector subspace of $L(E, F)$.
\end{enumerate}
\end{proposition}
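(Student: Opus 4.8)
The plan is to treat each part in two stages: first verify that the set is a vector subspace of $L(E,F)$, then verify that it is closed for the operator norm. In both stages the most convenient tool is a sequential description of the class in question. For the order L-weakly compact operators I will use the equivalence $(1)\Longleftrightarrow(3)$ of Theorem \ref{2}, and for the order M-weakly compact operators I will use the defining condition directly. Both descriptions assert that the scalar quantity $f_n(T(x_n))$ tends to $0$ along prescribed test sequences, which makes them well adapted to forming linear combinations and to passing to norm limits.

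For the vector subspace structure the argument is routine. Fix the relevant test sequences: for part (1) an order bounded sequence $(x_n)$ in $E$ together with a disjoint sequence $(f_n)$ in $B_{F'}$, and for part (2) a disjoint sequence $(x_n)$ in $B_E$ together with an order bounded sequence $(f_n)$ in $F'$. If $S$ and $T$ both lie in the class, then by linearity $f_n((S+T)(x_n))=f_n(S(x_n))+f_n(T(x_n))\to 0$ and $f_n((\lambda T)(x_n))=\lambda\, f_n(T(x_n))\to 0$, so $S+T$ and $\lambda T$ again satisfy the characterizing condition. Hence $oLW(E,F)$ and $oMW(E,F)$ are vector subspaces of $L(E,F)$.

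The closedness is where the single genuine idea enters, and it is the same for both parts: along the admissible test sequences the relevant norms are uniformly bounded. Suppose $(T_k)$ is a sequence in the class converging to $T$ in operator norm, and fix test sequences as above. Writing
$$f_n(T(x_n)) = f_n\big((T-T_k)(x_n)\big) + f_n(T_k(x_n)),$$
I estimate the two terms separately. In part (1), order boundedness gives $\vert x_n\vert\ioe u$ for some $u\in E^+$, whence $\Vert x_n\Vert\ioe\Vert u\Vert$, while $\Vert f_n\Vert\ioe 1$; in part (2), $\Vert x_n\Vert\ioe 1$ while order boundedness gives $\vert f_n\vert\ioe g$ for some $g\in(F')^+$, whence $\Vert f_n\Vert\ioe\Vert g\Vert$. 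In either case the first term is bounded by $\Vert T-T_k\Vert$ times a constant independent of $n$. Thus, given $\epsilon>0$, I first choose $k$ making this uniform bound smaller than $\epsilon/2$, and then, since $T_k$ belongs to the class, choose $N$ so that $\vert f_n(T_k(x_n))\vert<\epsilon/2$ for $n\soe N$; combining the two estimates yields $\vert f_n(T(x_n))\vert<\epsilon$ for $n\soe N$. Therefore $T$ satisfies the characterizing condition and belongs to the class.

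The only place an estimate could go wrong is the uniform control of the first summand, so that is the point I would keep straight: it relies on the fact that an order bounded sequence is automatically norm bounded, by monotonicity of the lattice norm, together with the fact that the disjoint sequences are drawn from the unit ball. Once this uniform bound is secured, the standard $\epsilon/2$ interchange closes both arguments with no further difficulty.
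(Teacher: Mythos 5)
Your proof is correct and follows essentially the same route as the paper's: both use the sequential characterization from Theorem \ref{2} (resp.\ the definition) for the subspace structure, and both establish closedness by splitting $f_n(T(x_n))$ into an approximation term controlled uniformly via the norm boundedness of the test sequences plus a term handled by membership of the approximant in the class. Your explicit remark that order boundedness yields a uniform norm bound is exactly the point the paper uses implicitly in writing $\parallel(x_n)\parallel$ and $\parallel(f_n)\parallel$.
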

\begin{proof} ${}$
\begin{enumerate}
\item Let $T_1,T_2 \in oLW(E,F)$, and $\alpha\in\R$. Let $(x_n)$ be an order bounded sequence of $E$  and $(f_ n )$ a disjoint sequence of $B_{F'}$.Since $T_1,T_2 \in oLW(E,F)$, it follows from $Theorem ~\ref{2}$, that $$f_n((\alpha T_1+T_2)(x_n))=\alpha f_n(T_1(x_n))+f_n(T_2(x_n))\rightarrow 0$$
Then $\alpha T_1+T_2\in oLW(E,F)$. Thus $oLW(E,F)$ is a vector subspace of $L(E, F)$. To see that it is also a closed vector
subspace of $L(E, F)$, let $T$ be in the closure of $oLW(E,F)$. Let $(x_ n ) $ be an order bounded sequence of $E$  and $(f_ n )$ a disjoint sequence of $B_{F'}$. We have to show that $f_n(T(x_n))\rightarrow 0$. To this end, let $\epsilon >0$. Pick an order L-weakly compact operator $S: E\longrightarrow F$ with $\parallel T-S \parallel<\epsilon$, and note that it
follows from the inequalities :
$$ \begin{array}{l l}
\vert f_n(T(x_n))\vert & \ioe \vert f_n((T-S)(x_n))\vert+\vert f_n(S(x_n))\vert \\
& \ioe \parallel f_n \parallel \parallel T-S \parallel \parallel(x_n)\parallel+\vert f_n(S(x_n))\vert
\end{array}$$
that $\limsup \vert f_n(T(x_n))\vert \ioe \epsilon \parallel(x_n)\parallel$.\\ Since $\epsilon$ is arbitrary, we see that $f_n(T(x_n))\rightarrow 0$ holds, as desired.
\item Clearly, $oMW(E,F)$ is a vector subspace of $L(E,F)$. To see that it is also a closed vector
subspace of $L(E, F)$, let $T$ be in the closure of $oMW(E,F)$. Assume that $(x_ n ) $ is a disjoint sequence of $B_{E}$, and $(f_ n )$ an order bounded sequence of $F'$. We have to show that $f_n(T(x_n))\rightarrow 0$. To this end, let $\epsilon >0$. Pick an order M-weakly compact operator $S: E\longrightarrow F$ with $\parallel T-S \parallel<\epsilon$, and note that it
follows from the inequalities:
$$ \begin{array}{l l}
 \vert f_n(T(x_n))\vert & \ioe \vert f_n((T-S)(x_n))\vert+\vert f_n(S(x_n))\vert \\
& \ioe \parallel f_n \parallel  \parallel T-S \parallel \parallel x_n\parallel+\vert f_n(S(x_n))\vert
\end{array}$$
that $\limsup \vert f_n(T(x_n)) \vert \ioe \epsilon \parallel f_n \parallel \ioe \epsilon \parallel(f_n)\parallel$.\\ Since $\epsilon$ is arbitrary, we see that $f_n(T(x_n))\rightarrow 0$ holds, as desired.
\end{enumerate}
\end{proof}
The classes of L-weakly and M-weakly compact operators are in duality with each other. However, for the classes of order L-weakly and order M-weakly compact operators, we have the following result.
\begin{theorem}\label{8}
Let $E$ and $F$ be two Banach lattices. Then the following statements hold:
\begin{enumerate}
\item An operator $T : E \longrightarrow F$ is order M-weakly compact if and only if its
adjoint $T'$ is order L-weakly compact.
\item For an operator $T: E\longrightarrow F$, if its adjoint $T'$ is order M-weakly compact, then $T$ is order L-weakly compact.
\end{enumerate}
\end{theorem}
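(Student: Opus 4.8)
The plan is to derive both parts purely from the sequential characterizations already established, using only the duality identity $T'(f)(x) = f(T(x))$ together with the fact that the canonical embedding of a Banach lattice into its bidual preserves order boundedness. No fresh analysis is needed beyond Theorems \ref{2} and \ref{3}.

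For part (1), I would simply apply Theorem \ref{3} to the operator $\varphi = T' : F' \longrightarrow E'$. Reading that theorem with the roles of $E$ and $F$ interchanged, it asserts that $T'$ is order L-weakly compact if and only if $T'(f_n)(x_n) \rightarrow 0$ for every order bounded sequence $(f_n)$ of $F'$ and every disjoint sequence $(x_n)$ of $B_E$. Since $T'(f_n)(x_n) = f_n(T(x_n))$, the right-hand condition is verbatim the definition of $T$ being order M-weakly compact. Hence the equivalence is immediate; once Theorem \ref{3} is in hand there is essentially nothing further to prove.

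For part (2), I would assume $T'$ is order M-weakly compact and verify characterization (3) of Theorem \ref{2} for $T$: given an order bounded sequence $(x_n)$ of $E$ and a disjoint sequence $(f_n)$ of $B_{F'}$, I must show $f_n(T(x_n)) \rightarrow 0$. The idea is to push $(x_n)$ into the bidual via the canonical embedding $J : E \longrightarrow E''$ and then invoke the definition of order M-weak compactness for $T' : F' \longrightarrow E'$. Concretely, $(f_n)$ is a disjoint sequence in $B_{F'}$ and $(Jx_n)$ will be an order bounded sequence in $(E')' = E''$, so order M-weak compactness of $T'$ yields $(Jx_n)(T'(f_n)) \rightarrow 0$; but $(Jx_n)(T'(f_n)) = T'(f_n)(x_n) = f_n(T(x_n))$, which is exactly the desired conclusion.

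The one point requiring care — and the only real obstacle — is checking that $(Jx_n)$ is order bounded in $E''$. This rests on the standard fact that the canonical embedding $J$ of a Banach lattice into its bidual is a lattice homomorphism, so from $|x_n| \ioe x$ we obtain $|Jx_n| = J|x_n| \ioe Jx$, i.e. $(Jx_n)$ lies in the order interval $[-Jx, Jx]$ of $E''$. Once this is noted the argument closes. I would also remark why only one implication is claimed in part (2): the converse would require passing from an arbitrary order bounded sequence of $E''$ back to one arising from $E$, and since the order intervals of $E''$ are strictly larger than the images under $J$ of order intervals of $E$, the condition defining order M-weak compactness of $T'$ (which quantifies over all order bounded sequences of $E''$) is genuinely stronger than order L-weak compactness of $T$.
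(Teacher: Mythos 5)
Your proposal is correct and follows essentially the same route as the paper: part (1) is read off directly from Theorem \ref{3} applied to $T'$ via the identity $T'(f_n)(x_n)=f_n(T(x_n))$, and part (2) uses the canonical embedding $J:E\longrightarrow E''$ to turn an order bounded sequence of $E$ into an order bounded sequence of $E''$ and then invokes order M-weak compactness of $T'$. Your added justification that $J$ is a lattice homomorphism (so $|x_n|\ioe x$ gives $|Jx_n|\ioe Jx$) fills in a detail the paper leaves implicit, and is a worthwhile addition.
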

\begin{proof} ${}$
\begin{enumerate}
\item Consider an operator $T : E\longrightarrow F$. By $Theorem~\ref{3}$, $T': F'\longrightarrow E'$ is order L-weakly compact if and only if $T'(f_ n) ( x_ n) \rightarrow 0$ for every order bounded sequence $(f_ n ) $ of $F'$ and every disjoint sequence $(x_ n )$ of $B_E$. This is equivalent to saying that $f_ n(T ( x_ n)) \rightarrow 0$ for every order bounded sequence $(f_ n ) $ of $F'$ and every disjoint sequence $(x_ n )$ of $B_E$. In other words, $T'$
is order L-weakly compact if and only if $T$ is order M-weakly compact.
\item Let $T: E\longrightarrow F$ be an operator such that $T'$ is order M-weakly compact.\\
Let $(x_ n ) $ be an order bounded sequence of $E$ and $(f_ n )$ a disjoint sequence of $B_{F'}$. Let $J: E\longrightarrow E''$ be the canonical embedding of $E$ into $E''$.\\
Since $T': F'\longrightarrow E'$ is order M-weakly compact, and  the sequence $(J(x_n))$ of $E''$ is order bounded, $J(x_n)(T'(f_n))=f_n(T(x_n))\rightarrow 0$. Hence $T$ is order L-weakly compact.
\end{enumerate}
\end{proof}
\begin{remark}
However, in general: $T$ is order L-weakly compact $\nRightarrow$ $T'$ is order M-weakly compact.
For instance, the identity operator of $c_0$ is order L-weakly compact (because the norm of $c_0$ is order continuous), but $Id'_{c_0}=Id_{\ell_1}$ is not order M-weakly compact (because the norm of $\ell_1'= \ell_\infty$ is not order continuous ).
\end{remark}
In the following, we give a necessary and sufficient condition for which each order bounded operator is order L-weakly (resp. M-weakly) compact.
\begin{theorem}\label{9}
Let $E$ and $F$ be nonzero Banach lattices. Then the following assertions are equivalent:
\begin{enumerate}
\item Every order bounded operator $T : E\longrightarrow F$ is order L-weakly compact.
\item The norm of $F$ is order continuous.
\end{enumerate}
\end{theorem}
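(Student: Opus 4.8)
The plan is to prove the two implications separately, relying throughout on the characterization that the norm of $F$ is order continuous if and only if $[0,y]$ is L-weakly compact for every $y\in F^+$ (this is Corollary~\ref{4} applied to $F$, equivalently the first item of the Remark following Definition~2.1).

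For $(2)\Longrightarrow(1)$ I would fix an order bounded operator $T:E\longrightarrow F$ and an element $x\in E^+$, and show that $T[0,x]$ is L-weakly compact. Since $T$ is order bounded and $[0,x]$ is order bounded, $T[0,x]$ is an order bounded (nonempty, norm bounded) subset of $F$, hence $T[0,x]\subseteq[-u,u]$ for some $u\in F^+$. Order continuity of the norm of $F$ then gives that $[0,u]$ is L-weakly compact, and since L-weak compactness depends only on the solid hull and $\mathrm{sol}([0,u])=[-u,u]=\mathrm{sol}([-u,u])$, the interval $[-u,u]$ is L-weakly compact as well. As any nonempty bounded subset of an L-weakly compact set is L-weakly compact (the solid hull of a subset is contained in the solid hull of the set), $T[0,x]$ is L-weakly compact, so $T\in oLW(E,F)$. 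This is the routine direction.

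For $(1)\Longrightarrow(2)$ I would argue by contraposition: assuming the norm of $F$ is \emph{not} order continuous, I construct an order bounded operator that fails to be order L-weakly compact. By the characterization above there is $y\in F^+$ such that $[0,y]$ is not L-weakly compact, so there is a disjoint sequence $(z_n)$ in $\mathrm{sol}([0,y])=[-y,y]$ with $\Vert z_n\Vert\nrightarrow 0$. Since $E\neq\{0\}$, I pick a positive functional $\phi\in(E')^+$ and an $x_0\in E^+$ with $\phi(x_0)=1$, and define the rank-one operator $T:E\longrightarrow F$ by $T(x)=\phi(x)\,y$. The key computation is $T[0,x_0]=\{t y : t\in[0,1]\}$, whose solid hull is exactly $[-y,y]$; thus $(z_n)$ is a disjoint sequence in the solid hull of $T[0,x_0]$ that is not norm null, whence $T[0,x_0]$ is not L-weakly compact and $T\notin oLW(E,F)$. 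It then remains only to confirm that $T$ is order bounded, which holds because $\phi$ carries each order interval of $E$ onto a bounded real interval, so $T$ maps it into a bounded subset of the line $\mathbb{R}y$, which is order bounded in $F$.

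The step requiring the most care is the construction in $(1)\Longrightarrow(2)$: one must justify the existence of a nonzero positive functional $\phi$ (this is precisely where the hypothesis $E\neq\{0\}$ enters, via the Hahn--Banach theorem for Banach lattices) and verify both that the constructed $T$ is genuinely order bounded and that it fails order L-weak compactness. I expect the only real (and still mild) obstacle to be the bookkeeping in the solid-hull identity $\mathrm{sol}(T[0,x_0])=[-y,y]$ together with the legitimacy of the normalization $\phi(x_0)=1$; once these are in place, the result is a direct consequence of Corollary~\ref{4} and the definitions of order bounded and order L-weakly compact operators.
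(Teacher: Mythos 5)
Your proposal is correct and follows essentially the same route as the paper: the forward direction via the rank-one operator $T(x)=\phi(x)y$ built from a non-norm-null disjoint sequence in $[\,0,y]$ (the paper extracts it directly from Theorem~4.14 of \cite{AB1}, you from the failure of L-weak compactness of $[0,y]$, which is the same fact), and the reverse direction from the observation that order bounded sets are L-weakly compact when the norm of $F$ is order continuous. The extra details you supply (the solid-hull identity and the existence of the positive functional $\phi$) are accurate and merely flesh out steps the paper leaves implicit.
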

\begin{proof} ${}$
$(1)\Longrightarrow (2)$ Assume by way of contradiction that the norm of $F$ is not order continuous. We have to construct an order bounded operator $T : E \longrightarrow F$ which is not order L-weakly compact. Since the norm of $F$ is not order continuous, it follows from $Theorem ~4.14$ of \cite{AB1} that there exists some $y \in F^ +$ and there exists a disjoint sequence $(y_ n )$ in $[0, y]$ which does not converge to zero in norm. Pick some $a\in E^+$, and $f\in E'^+$ such that $f(a)=1$.
Now, we consider the positive operator $T : E\longrightarrow F$ defined by $T (x) = f (x) y$ for each $x \in E$.\\
$T$ is order bounded but not order L-weakly compact. Indeed, note that $T(a)=y$. So it follows from $(y_n ) \subset [0, y]$ that $(y_ n )$ is a disjoint sequence in the solid hull of $T[0,a]$. Since $(y_n)$ is not norm convergent to zero, $T$ is not order L-weakly compact. But this is in contradiction with our hypothesis $(1)$. Hence, the norm of $F$ is order continuous.\\
$(2)\Longrightarrow (1)$
Follows from the fact that in a Banach lattice with order continuous norm, every order bounded set is L-weakly compact.
\end{proof}
\begin{theorem}\label{10}
Let $E$ and $F$ be nonzero Banach lattices. Then the following assertions are equivalent:
\begin{enumerate}
\item Every order bounded operator $T : E\longrightarrow F$ is order M-weakly compact.
\item The norm of $E'$ is order continuous.
\end{enumerate}
\end{theorem}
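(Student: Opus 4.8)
The plan is to mirror the proof of Theorem~\ref{9}, interchanging the roles of $F$ and $E'$ and exploiting the duality recorded in Theorem~\ref{8} together with the dual characterization of order continuity furnished by Corollary~\ref{5}.

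For $(1)\Rightarrow(2)$ I would argue by contraposition, paralleling the construction in Theorem~\ref{9}. Assume the norm of $E'$ is not order continuous. By the equivalence $(2)\Leftrightarrow(3)$ of Corollary~\ref{5}, there exist an order bounded sequence $(f_n)$ in $E'$, say $|f_n|\le\psi$ with $\psi\in (E')^+$, and a disjoint sequence $(x_n)$ in $B_E$ with $f_n(x_n)\nrightarrow 0$; passing to a subsequence we may assume $|f_n(x_n)|\ge\varepsilon$ for all $n$. Setting $u_n:=|x_n|$ gives a disjoint sequence in $B_E$ with $\psi(u_n)\ge|f_n|(|x_n|)\ge|f_n(x_n)|\ge\varepsilon$, so $\psi(u_n)\nrightarrow 0$. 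Since $F\ne\{0\}$, I fix $b\in F^+$ with $b\ne 0$ and a positive functional $h\in F'$ with $h(b)=1$, and define the positive (hence order bounded) operator $T:E\to F$ by $Tx=\psi(x)\,b$. Taking the constant, order bounded sequence $h_n=h$, one gets $h_n(Tu_n)=\psi(u_n)\ge\varepsilon\nrightarrow 0$, so $T$ fails to be order M-weakly compact, contradicting $(1)$.

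For $(2)\Rightarrow(1)$ the natural framing is via Theorem~\ref{8}(1): $T$ is order M-weakly compact iff $T'$ is order L-weakly compact, and by Theorem~\ref{9} (applied to operators from $F'$ into $E'$) order continuity of the norm of $E'$ makes every order bounded operator $F'\to E'$ order L-weakly compact. Concretely, let $(x_n)$ be disjoint in $B_E$ and $(f_n)$ order bounded in $F'$, say $|f_n|\le g$; writing $f_n(Tx_n)=(T'f_n)(x_n)$, it suffices by Corollary~\ref{5}$((2)\Rightarrow(3))$ to show that $(T'f_n)$ is an order bounded sequence in $E'$. Since $E'$ is Dedekind complete, each modulus $|T'f_n|$ exists, and for $x\in E^+$,
\[
|T'f_n|(x)=\sup_{|z|\le x}|f_n(Tz)|\le\sup_{|z|\le x}g(|Tz|)\le\|g\|\,\|T\|\,\|x\|,
\]
so $\||T'f_n|\|\le\|g\|\,\|T\|$ uniformly in $n$, and the increasing sequence $\Phi_N:=\bigvee_{n\le N}|T'f_n|$ is norm bounded in $E'$.

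The main obstacle is precisely to extract from these finite suprema a \emph{single} order bound valid for all $n$, and this is where the hypothesis that $E'$ is a \emph{dual} lattice with order continuous norm is decisive: an order continuous dual Banach lattice is a KB-space (equivalently, $E'$ is monotonically complete), so the increasing norm bounded sequence $(\Phi_N)$ converges in norm to its supremum $\phi:=\sup_N\Phi_N\in E'$, whence $|T'f_n|\le\phi$ for every $n$ and $(T'f_n)$ is order bounded. With this in hand, Corollary~\ref{5} applied to the order bounded sequence $(T'f_n)$ and the disjoint sequence $(x_n)\subset B_E$ gives $(T'f_n)(x_n)=f_n(Tx_n)\to 0$, so $T$ is order M-weakly compact. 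I note that this direction in fact uses only the boundedness of $T$; the order boundedness in the statement is what naturally enters through the counterexample in $(1)\Rightarrow(2)$.
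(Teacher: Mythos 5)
Your direction $(1)\Rightarrow(2)$ is correct: negating Corollary~\ref{5}$(3)$, passing to $u_n=|x_n|$, and testing the positive rank-one operator $Tx=\psi(x)b$ against the constant order bounded sequence $(h)$ does yield an order bounded operator that is not order M-weakly compact. This is a legitimate (and slightly more direct) variant of the paper's construction, which instead extracts a disjoint non-null sequence in an order interval of $E'$ via Theorem~4.14 of \cite{AB1} and detects the failure on the adjoint through Theorem~\ref{8}.

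The direction $(2)\Rightarrow(1)$ has a genuine gap. From $\||T'f_n|\|\le\|g\|\,\|T\|$ you conclude that the finite suprema $\Phi_N=\bigvee_{n\le N}|T'f_n|$ are norm bounded, but a uniform norm bound on the individual terms does not bound their supremum: in $C[0,1]'$ the supremum of $N$ distinct point masses has norm $N$. So the KB-space argument never gets started. Worse, the intermediate claim you are after --- that $(T'f_n)$ is order bounded for merely norm bounded $T$, so that ``this direction uses only the boundedness of $T$'' --- is false. Take $E=\ell_2$ (so $E'=\ell_2$ has order continuous norm), $F=L^1[0,1]$, and $T(a)=\sum_n a_n r_n$ with $(r_n)$ the Rademacher functions; $T$ is bounded by Khintchine's inequality, $(e_n)$ is disjoint in $B_{\ell_2}$, the functionals $f_n(y)=\int_0^1 y\,r_n$ satisfy $|f_n|=\mathbf{1}\in L^\infty=F'$, hence form an order bounded sequence, and yet $f_n(Te_n)=\int_0^1 r_n^2=1$ for all $n$. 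Thus the order boundedness of $T$ is exactly what this direction must use, and it is also what repairs the argument: since $T$ is order bounded, $T':F'\to E'$ is order bounded (Theorem~1.73 of \cite{AB1}) with values in the Dedekind complete lattice $E'$, hence possesses a modulus, and $|T'f_n|\le|T'|(g)$ supplies the single order bound you were seeking; equivalently, apply Theorem~\ref{9} to the order bounded operator $T'$ and conclude by Theorem~\ref{8}(1). That last route is precisely the paper's proof, and it is the one-sentence ``natural framing'' you state at the outset of this direction --- what is missing there is only the observation that $T'$ inherits order boundedness from $T$; the subsequent ``concrete'' elaboration should be discarded.
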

\begin{proof} ${}$
$(1)\Longrightarrow (2)$ Assume by way of contradiction that the norm of $E'$ is not order continuous. We have to construct an order bounded operator $T : E \longrightarrow F$ which is not order M-weakly compact. Since the norm of $E'$ is not order continuous, it follows from $Theorem ~4.14$ of \cite{AB1} that there exists some $f \in E'^ +$ and there exists a disjoint sequence $(f_ n )$ in $[0, f]$ which does not converge to zero in norm. Pick some $b\in F^+$, and $g\in F'^+$, such that $g(b)=1$.
Now, we consider the positive operator $T : E\longrightarrow F$ defined by $T (x) = f (x) b$ for each $x \in E$.
$T$ is order bounded, on the other hand, we claim that $T$ is not order M-weakly compact. By $Theorem ~\ref{8}$, it suffices to show that its adjoint $T': F'\longrightarrow E'$ is not order L-weakly compact.
Note that $T' (\varphi) = \varphi (b) f$ for each $\varphi \in F'$.
In particular, $T'(g) = g(b) f = f$. So, $(f_ n )$ is a disjoint sequence in the solid hull of $T' [0,g]$. Since $(f_ n )$ is not norm convergent to zero, $T'$ is not order L-weakly
compact. Hence $T$ is not order M-weakly compact. But this is in contradiction with our hypothesis $(1)$. Hence, the norm of $E'$ is order continuous.\\
$(2)\Longrightarrow (1)$ Let $T:E\longrightarrow F$ be  an order bounded operator. 
By $Theorem ~1.73$ of \cite{AB1} the adjoint operator $T' ~ : F' \longrightarrow E'$ is order bounded.  Since the norm of $E'$ is order continuous, it follows from
$Theorem~\ref{9}$ that $T'$ is order L--weakly compact, and so by $Theorem~\ref{8}$ the operator $T$ is order M-weakly compact.
\end{proof}
\begin{definition}\label{11}
An operator $T$ from
a Banach space $X$ into a Banach lattice $F$ is called strong order bounded if it maps the unit ball of $X$ into an order bounded subset of $F$ .
\end{definition}
\begin{theorem}\label{12}
For an operator $T :E \longrightarrow F$ , the following statements are equivalent :
\begin{enumerate}
\item $T$ is  order L-weakly compact .
\item If $S : X \longrightarrow E$ is a strong order bounded operator from an arbitrary Banach space
$X$ into $E$, the product $T \circ S$ is L-weakly compact.
\item If $S : \ell_1 \longrightarrow E$ is a strong order bounded operator, the product $T \circ S$ is L-weakly compact.
\end{enumerate}
\end{theorem}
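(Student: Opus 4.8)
The plan is to establish the cyclic chain $(1)\Rightarrow(2)\Rightarrow(3)\Rightarrow(1)$. The implication $(2)\Rightarrow(3)$ is immediate, since $\ell_1$ is a particular Banach space: it suffices to specialize $X=\ell_1$, so I would dispatch it in one line.

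For $(1)\Rightarrow(2)$, let $S:X\longrightarrow E$ be strong order bounded, so there is some $x\in E^+$ with $S(B_X)\subseteq[-x,x]$. Then $(T\circ S)(B_X)=T(S(B_X))\subseteq T[-x,x]$ is norm bounded, and to prove it is L-weakly compact I would invoke the duality in Lemma~\ref{1}: it is enough to show that every disjoint sequence $(f_n)$ of $B_{F'}$ converges uniformly to zero on $T(S(B_X))$, using that $B_{F'}$ is solid and that the norm is the supremum over $B_{F'}$. For any $z\in S(B_X)$ one has $|z|\le x$, hence $|f_n(Tz)|=|(T'f_n)(z)|\le |T'f_n|(x)$, and since $T$ is order L-weakly compact, Theorem~\ref{2} gives $|T'f_n|(x)\to 0$. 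Thus $\sup\{|f_n(w)|:w\in T(S(B_X))\}\to 0$, and Lemma~\ref{1} yields that $T(S(B_X))$ is L-weakly compact, that is, $T\circ S$ is L-weakly compact.

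The substantive direction is $(3)\Rightarrow(1)$, which I would prove by contraposition. If $T$ is not order L-weakly compact, Theorem~\ref{2} provides a disjoint sequence $(f_n)$ in $B_{F'}$ with $|T'f_n|\nrightarrow 0$ for $\sigma(E',E)$; hence there are $x\in E^+$, $\varepsilon>0$ and, after passing to a subsequence, $|T'f_n|(x)>\varepsilon$ for every $n$. Writing $|T'f_n|(x)=\sup\{|f_n(Tz)|:z\in[-x,x]\}$, I may select $z_n\in[-x,x]$ with $|f_n(Tz_n)|>\varepsilon$. The key construction is then the operator $S:\ell_1\longrightarrow E$, $S((\alpha_k))=\sum_k\alpha_k z_k$; since each $z_k\in[-x,x]$, the estimate $|\sum_k\alpha_k z_k|\le(\sum_k|\alpha_k|)\,x\le x$ for $(\alpha_k)\in B_{\ell_1}$ shows $S(B_{\ell_1})\subseteq[-x,x]$, so $S$ is bounded and strong order bounded. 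By hypothesis $T\circ S$ is L-weakly compact, so $T(S(B_{\ell_1}))$ is L-weakly compact, and by Lemma~\ref{1} the disjoint sequence $(f_n)\subseteq B_{F'}$ converges uniformly to zero on it. But $Tz_n=T(S(e_n))\in T(S(B_{\ell_1}))$, which forces $|f_n(Tz_n)|\to 0$ and contradicts $|f_n(Tz_n)|>\varepsilon$.

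I expect the main obstacle to be the construction in $(3)\Rightarrow(1)$: the role of $\ell_1$ is precisely that the solid, absolutely convex structure of $B_{\ell_1}$ collapses the uniformly bounded family $(z_n)\subseteq[-x,x]$ into a single order-bounded image $S(B_{\ell_1})\subseteq[-x,x]$, which is exactly what makes $S$ strong order bounded; one must also extract the witnesses $z_n$ correctly from the failure of weak convergence of $(|T'f_n|)$. The remaining steps are routine bookkeeping with Lemma~\ref{1} and Theorem~\ref{2}.
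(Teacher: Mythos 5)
Your proof is correct and follows essentially the same route as the paper: the heart of both arguments is the construction of the strong order bounded operator $S:\ell_1\longrightarrow E$, $S((\alpha_k))=\sum_k\alpha_k z_k$, from an order bounded sequence $(z_k)$, combined with the characterization in Theorem~\ref{2}. The only cosmetic differences are that you argue $(3)\Rightarrow(1)$ by contraposition rather than directly, use Lemma~\ref{1} where the paper cites Proposition~3.6.2 of \cite{MN1}, and write out $(1)\Rightarrow(2)$, which the paper dismisses as obvious.
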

\begin{proof} ${}$
\begin{description}
\item[$1\Rightarrow 2 \Rightarrow 3$] Obvious.
\item[$3\Rightarrow 1$] According to $Theorem~\ref{2}$, it suffices to show that $f_n(T (x_ n )) \rightarrow 0$ for every order bounded sequence $(x_ n ) $ of $E$ and every disjoint sequence $(f_ n )$ of $B_{F'}$. Indeed, let $(x_ n )$ and $(f_n)$ be such sequences. Consider the operator $S : \ell_1 \longrightarrow E$ defined by :
$$\forall (\alpha_n)\in\ell_1,~S((\alpha_n))=\sum\limits_{n=1}^{\infty}\alpha_nx_n$$
Now let $(e_n)$ denote the sequence of basic vectors of $\ell_1$. Clearly, $S(e_ n )=x_n$, and $S$ is strong order bounded. So by our hypothesis, the product operator
$T \circ S$ is L-weakly compact. $(e_ n) $ is a norm bounded sequence of $\ell_1$. Then, according to \cite[$Proposition~3.6.2$]{MN1}, $f_n(T\circ S(e_n))\rightarrow 0$. Hence $f_n(T(x_n))=f_n(T\circ S(e_n))\rightarrow 0$, as desired.
\end{description}
\end{proof}
We get, so the following result.
\begin{cor}
For a Banach lattice $E$ the following assertions are equivalent.
\begin{enumerate}
\item The norm of $E$ is order continuous,
\item For each Banach space $X$, every strong order bounded operator $T : X \longrightarrow E$ is
L-weakly compact.
\item Every strong order bounded operator $T : \ell_1\longrightarrow E$ is L-weakly compact.
\end{enumerate}
\end{cor}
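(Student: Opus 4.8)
The plan is to obtain this corollary as the specialization of $Theorem~\ref{12}$ to the identity operator of $E$. In applying that theorem I would set its fixed operator (denoted $T$ there) equal to $Id_E : E \longrightarrow E$, so that $F = E$; the operator called $S : X \longrightarrow E$ in $Theorem~\ref{12}$ then plays the role of the operator named $T$ in the present statement. The one computation needed is the trivial identity $Id_E \circ S = S$, which shows that the product $T \circ S$ occurring in conditions $(2)$ and $(3)$ of $Theorem~\ref{12}$ reduces to $S$ itself.

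With this choice, condition $(2)$ of $Theorem~\ref{12}$ reads ``for each Banach space $X$, every strong order bounded operator $S : X \longrightarrow E$ is L-weakly compact,'' which is exactly assertion $(2)$ here, and condition $(3)$ reads ``every strong order bounded operator $S : \ell_1 \longrightarrow E$ is L-weakly compact,'' which is assertion $(3)$ here. Hence $Theorem~\ref{12}$ already delivers the mutual equivalence of assertions $(2)$ and $(3)$ and their equivalence with ``$Id_E$ is order L-weakly compact.''

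It then remains only to replace ``$Id_E$ is order L-weakly compact'' by ``the norm of $E$ is order continuous.'' This is precisely the equivalence $(1) \Longleftrightarrow (2)$ of $Corollary~\ref{4}$ (equivalently, the first item of the Remark following the first definition of this section). Chaining this equivalence with the two coming from $Theorem~\ref{12}$ closes the cycle $(1) \Longleftrightarrow (2) \Longleftrightarrow (3)$.

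I expect no genuine obstacle here: the argument is pure bookkeeping once $Theorem~\ref{12}$ and $Corollary~\ref{4}$ are available. The only points deserving a moment's attention are the notational clash between the symbol $T$ used for two different operators in $Theorem~\ref{12}$ and in the corollary, and the verification that taking $F = E$ is permitted so that $Id_E$ qualifies as the fixed operator of $Theorem~\ref{12}$; both are immediate.
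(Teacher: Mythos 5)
Your proposal is correct and matches the paper's intent exactly: the paper states this corollary as an immediate consequence of Theorem~\ref{12} applied to $T = Id_E$, combined with the equivalence between order continuity of the norm of $E$ and order L-weak compactness of $Id_E$ from Corollary~\ref{4}. The bookkeeping you describe (including $Id_E \circ S = S$) is precisely what is implicit in the paper's "We get, so the following result."
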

\begin{theorem}
For an operator $T :E\longrightarrow F$, the following statements are equivalent :
\begin{enumerate}
\item $T$ is order M-weakly compact .
\item For any operator $S : F \longrightarrow Y$ from $F$  into an arbitrary Banach space $Y$ such that $S'$ is strong order bounded, the product $S \circ T$ is M-weakly compact.
\item For any operator $S : F \longrightarrow \ell_{\infty}$ such that $S'$ is strong order bounded, the product $S \circ T$ is M-weakly compact.\\
\end{enumerate}
\end{theorem}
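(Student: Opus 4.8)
The plan is to establish the cyclic chain $(1)\Rightarrow(2)\Rightarrow(3)\Rightarrow(1)$, exactly paralleling the proof of Theorem~\ref{12} but in the dual direction. The implication $(2)\Rightarrow(3)$ is immediate by taking $Y=\ell_\infty$, so the work lies in $(1)\Rightarrow(2)$ and $(3)\Rightarrow(1)$. Both rest on the identity $\|(S\circ T)(x)\|=\sup_{g\in B_{Y'}}|(S'g)(Tx)|$ together with the observation that the hypothesis ``$S'$ strong order bounded'' turns $\{S'g:\ g\in B_{Y'}\}$ into an order bounded subset of $F'$, which is precisely the input that order $M$-weak compactness of $T$ consumes.

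For $(1)\Rightarrow(2)$, let $S:F\to Y$ have $S'$ strong order bounded, so there is $h\in (F')^+$ with $|S'g|\ioe h$ for all $g\in B_{Y'}$. Given a norm bounded disjoint sequence $(x_n)$ in $E$, which after normalization I may assume lies in $B_E$, I would choose for each $n$ a functional $g_n\in B_{Y'}$ with $|g_n((S\circ T)(x_n))|\soe \|(S\circ T)(x_n)\|-\tfrac1n$ and set $f_n:=S'(g_n)$. Then $(f_n)$ is order bounded (it lies in $[-h,h]$) while $(x_n)$ is disjoint in $B_E$, so order $M$-weak compactness of $T$ gives $f_n(T(x_n))\to 0$; since $|f_n(T(x_n))|=|g_n((S\circ T)(x_n))|\soe\|(S\circ T)(x_n)\|-\tfrac1n$, this forces $\|(S\circ T)(x_n)\|\to 0$, i.e.\ $S\circ T$ is $M$-weakly compact.

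For $(3)\Rightarrow(1)$, I would verify the definition directly: fix a disjoint sequence $(x_n)$ in $B_E$ and an order bounded sequence $(f_n)$ in $F'$, say $|f_n|\ioe h$ with $h\in(F')^+$, and show $f_n(T(x_n))\to 0$. Define $S:F\to\ell_\infty$ by $S(y)=(f_n(y))_n$; the bound $|f_n(y)|\ioe h(|y|)\ioe\|h\|\,\|y\|$ shows $S$ is a well defined operator. Applying hypothesis $(3)$ then makes $S\circ T$ $M$-weakly compact, so $\|(S\circ T)(x_n)\|_\infty=\sup_m|f_m(T(x_n))|\to 0$, and reading the diagonal gives $|f_n(T(x_n))|\ioe\sup_m|f_m(T(x_n))|\to 0$, as wanted.

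The key step, and the one I expect to be the main obstacle, is verifying that $S'$ is strong order bounded in $(3)\Rightarrow(1)$, since only then may hypothesis $(3)$ be invoked. This is a computation with the modulus in the dual lattice $F'$: for $\phi\in B_{(\ell_\infty)'}$ and $y\soe 0$ one has $|S'\phi|(y)=\sup_{|z|\ioe y}|\phi((f_n(z))_n)|\ioe \sup_{|z|\ioe y}\sup_n|f_n(z)|\ioe h(y)$, where the last inequality uses $|f_n(z)|\ioe|f_n|(|z|)\ioe|f_n|(y)\ioe h(y)$ for $|z|\ioe y$. Hence $S'(B_{(\ell_\infty)'})\inclu[-h,h]$ is order bounded. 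By comparison the choice of near-maximizers $g_n$ in $(1)\Rightarrow(2)$ is routine, the only minor point being the normalization of a norm bounded disjoint sequence to one inside $B_E$.
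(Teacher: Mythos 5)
Your proof is correct, and it is worth noting where it coincides with and where it departs from the paper's. Your $(3)\Rightarrow(1)$ is essentially the paper's own argument: the same operator $S(y)=(f_1(y),f_2(y),\dots)$, the same modulus computation showing $S'(B_{(\ell_\infty)'})\subset[-h,h]$, and the same diagonal reading of $\|(S\circ T)(x_n)\|_\infty\to 0$. For $(1)\Rightarrow(2)$, however, you take a genuinely different route. The paper argues by duality: it invokes Theorem~\ref{8} to translate order M-weak compactness of $T$ into order L-weak compactness of $T'$, then applies Theorem~\ref{12} to the strong order bounded operator $S'$ to conclude that $(S\circ T)'=T'\circ S'$ is L-weakly compact, and finally cites Proposition~3.6.11 of Meyer-Nieberg to descend back to M-weak compactness of $S\circ T$. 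You instead argue directly: choosing near-norming functionals $g_n\in B_{Y'}$ for $(S\circ T)(x_n)$ and setting $f_n=S'(g_n)$, which is an order bounded sequence in $F'$ precisely because $S'$ is strong order bounded, so that order M-weak compactness of $T$ applied to the pair $\bigl((x_n),(f_n)\bigr)$ forces $\|(S\circ T)(x_n)\|\to 0$. Your version is shorter and self-contained --- it needs neither Theorem~\ref{8} nor Theorem~\ref{12} nor the external duality result --- at the cost of not displaying the structural point the paper is emphasizing, namely that this theorem is the adjoint counterpart of Theorem~\ref{12}. The details you flag as delicate (normalizing the norm bounded disjoint sequence into $B_E$, and the estimate $|S'\phi|(y)\leq h(y)$ via $|\psi|(y)=\sup\{|\psi(z)|:|z|\leq y\}$) are handled correctly.
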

\begin{proof} ${}$
\begin{description}
\item[$ 1 \Rightarrow 2$]
 Let $T :E\longrightarrow F$ be an order M-weakly compact and $S : F \longrightarrow Y$ such that $S'$ is strong order bounded. It follows from $Theorem~\ref{8}$ that $T'$ is order L-weakly compact, and since $S'$ is strong order bounded, it follows  from $Theorem~\ref{12}$ that $(S\circ T)'=T'\circ S'$ is L-weakly compact. We conclude from $Proposition ~3.6.11.$ of \cite{MN1} that $S\circ T$ is M-weakly compact.
\item[$ 2 \Rightarrow 3$] Obvious.
\item[$3\Rightarrow 1$] Let $(x_n)$ be a disjoint sequence of $B_E$ and let $(f_ n )$ be an order bounded sequence of $F'$. Let $f\in F'_+$ satisfying $\vert f_n\vert \ioe f$ for each $n\in\N$\\
Consider the operator $S : F \longrightarrow \ell_{\infty}$ defined by :
$$\forall y\in F,~S(y)=(f_1(y),f_2(y),\ldots)$$
Since for each $g\in\ell_{\infty}'$, $\vert S'(g)\vert \ioe \Vert g\Vert f$,
it follows that $S'$ is strong order bounded, then by our hypothesis, the product operator
$S \circ T$ is M-weakly compact.\\ Thus $\Vert S \circ T(x_n)\Vert=\Vert (f_1(T(x_n)),f_2(T(x_n)),\ldots)\Vert\rightarrow 0.$ And hence $f_n(T(x_n)) \rightarrow 0$, as desired.
\end{description}
\end{proof}
From $Corollary$ \ref{5}, the norm of $E'$ is order continuous if and only if $Id_{E}$ is order M-weakly compact. We then obtain the following result.
\begin{cor}
For a Banach lattice $E$ the following assertions are equivalent:
\begin{enumerate}
\item The norm of $E'$ is order continuous.
\item For each Banach space $Y$, for every operator $T : E \longrightarrow Y$ such that $T'$ is strong order bounded, $T$ is M-weakly compact.
\item For every operator $T : E \longrightarrow \ell_\infty $, if $T'$ is strong order bounded, then $T$ is M-weakly compact.\end{enumerate}
\end{cor}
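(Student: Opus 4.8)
The final statement is a corollary asserting the equivalence of three conditions: $E'$ has order continuous norm; every operator $T:E\to Y$ with strong order bounded adjoint is M-weakly compact; and the same restricted to $Y=\ell_\infty$. The plan is to deduce this directly from the theorem immediately preceding it, which characterizes order M-weakly compact operators via products $S\circ T$ with $S'$ strong order bounded. The bridge between the two is Corollary~\ref{5}, which tells us that the norm of $E'$ is order continuous if and only if $Id_E\in oMW(E)$, i.e. $Id_E$ is order M-weakly compact.

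First I would establish $(1)\Rightarrow(2)$. Assuming the norm of $E'$ is order continuous, Corollary~\ref{5} gives that $Id_E$ is order M-weakly compact. I then apply the preceding theorem with $T=Id_E$: for every operator $S:E\to Y$ (into an arbitrary Banach space) whose adjoint $S'$ is strong order bounded, the product $S\circ Id_E=S$ is M-weakly compact. Renaming $S$ as the operator $T:E\to Y$ of the corollary's statement yields exactly assertion $(2)$.

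The implication $(2)\Rightarrow(3)$ is obvious, since it is merely the specialization to the particular target space $Y=\ell_\infty$. For $(3)\Rightarrow(1)$, I would again invoke the preceding theorem, this time in the converse direction. Condition $(3)$ says that for every $T:E\to\ell_\infty$ with $T'$ strong order bounded, $T$ is M-weakly compact; this is precisely statement $(3)$ of the preceding theorem read with the roles arranged so that the operator being tested is $Id_E$. Concretely, the preceding theorem's $(3)\Rightarrow(1)$ shows that if every $S:F\to\ell_\infty$ with $S'$ strong order bounded makes $S\circ T$ M-weakly compact, then $T$ is order M-weakly compact; taking $T=Id_E$ (so $F=E$) converts hypothesis $(3)$ here into the conclusion that $Id_E$ is order M-weakly compact, and Corollary~\ref{5} then returns order continuity of the norm of $E'$.

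I do not expect any genuine obstacle, as the entire argument is a transcription of the preceding theorem under the substitution $T=Id_E$ together with the dictionary supplied by Corollary~\ref{5}. The only point requiring a moment of care is the bookkeeping of names: the corollary's operator called $T$ plays the role of the theorem's operator called $S$ (the one whose adjoint is strong order bounded), while the theorem's operator $T$ is specialized to the identity. Once this identification is made explicit, both nontrivial implications follow immediately, and the proof reduces to citing Corollary~\ref{5} and the preceding theorem.
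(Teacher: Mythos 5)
Your proposal is correct and is exactly the paper's (implicit) argument: the paper derives this corollary by noting, via Corollary~\ref{5}, that order continuity of the norm of $E'$ is equivalent to $Id_E$ being order M-weakly compact, and then specializing the preceding theorem to $T=Id_E$ so that the corollary's operator plays the role of the theorem's $S$. Your bookkeeping of the renaming is precisely the only point of care, and you handled it correctly.
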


\end{document}